\documentclass[11pt]{article}
\usepackage[a4paper, total={6in, 8in}]{geometry}%linia potencjalnie problemowa
\usepackage{amsmath,amssymb, amsthm}
\usepackage{array}
\usepackage{booktabs}
\usepackage{mathtools}
\usepackage{amsthm}
\usepackage{xfrac}
\usepackage{faktor}
\usepackage{stackrel}
\usepackage{multicol}
\usepackage{centernot}
\usepackage{bbm}
\usepackage{tikz}
\usepackage{xcolor}
\usepackage{rotating}
\usepackage{longtable}
\usepackage{tikz-cd}
\usepackage{graphicx}
\usepackage{ulem}
\usepackage{indentfirst}
\usepackage{lipsum}
\usetikzlibrary{arrows}

\newgeometry{tmargin=3cm, bmargin=3cm, lmargin=3cm, rmargin=3cm}

\newcommand\blfootnote[1]{%
  \begingroup
  \renewcommand\thefootnote{}\footnote{#1}%
  \addtocounter{footnote}{-1}%
  \endgroup
}

\def\XXint#1#2#3{{\setbox0=\hbox{$#1{#2#3}{\int}$ }
\vcenter{\hbox{$#2#3$ }}\kern-.6\wd0}}

\theoremstyle{definition}
\newtheorem{thm}{Theorem}
\newtheorem{prop}{Proposition}[section]
\newtheorem{df}[prop]{Definition}
\newtheorem{lem}[prop]{Lemma}

\newtheorem{rem}[prop]{Remark}

\newtheorem*{xrem}{Remark}

\newcommand{\e}{\varepsilon}

\newcommand{\E}{\mathbb{E}}

\newcommand{\R}{\mathbb{R}}

\newcommand{\Prb}{\mathbb{P}}
\newcommand{\Var}{\text{Var}}
\newcommand{\1}{\mathbf{1}}
\newcommand{\vol}{\text{vol}}

\newcommand{\Proj}{\text{Proj}}

\title{Stability of extremal hyperplane projections of balls in $\ell_p^n(\R)$}
\author{Jacek Jakimiuk}
\date{}

\begin{document}
    \maketitle{}

    \begin{abstract}
        We prove stability estimates for the volume of central hyperplane projections of unit balls in $\ell_p^n(\R)$, establishing the dual counterpart of the result of Chasapis, Nayar and Tkocz on stability of sections.

        \blfootnote{\noindent\textit{Key words:} convex functions, Gaussian mixtures, stability, volume of projections.
    
        \textit{2020 Mathematics Subject Classification:} Primary 52A40, 60G50; Secondary 52A20, 60E15.}
    \end{abstract}

    \section{Introduction}

    For $1 \le p \le \infty$, let
    \begin{align*}
        B_p^n=\{x\in\R^n:\|x\|_p\leq 1\}
    \end{align*}
    be the unit ball in the standard $\ell_p^n$ norm. The problem of determining the central hyperplane sections and projections of $B_p^n$ of maximal and minimal volume is a significant problem in convex geometry. Once an extremal hyperplane is known, a natural next question concerns stability, understood as the following quantitative rigidity: how much must the volume deteriorate when the normal vector moves away from the extremiser?

    Let $e_1, \ldots, e_n$ be the standard orthonormal basis in $\R^n$ and let $a_{(k)} = \frac{1}{\sqrt{k}}\sum_{i=1}^ke_i$. It was established in the papers \cite{Ha, He, S, B, MP, K, BN} that $a_{(1)}^{\perp}$ yields the maximal section and projection for $1 \le p < 2$ and the minimal section and projection for $2 < p \le \infty$, $a_{(n)}^{\perp}$ yields the minimal section for $1 \le p < 2$ and the maximal projection for $2 < p \le \infty$, and $a_{(2)}^{\perp}$ yields the maximal section for $p = \infty$ and the minimal projection for $p = 1$. The works \cite{O, ENT-res, Koe} provided partial results in the remaining regime of maximal sections for $2 < p < \infty$ and minimal projections for $1 < p < 2$. The study of stability for sections was developed by Chasapis, Nayar and Tkocz in \cite{CNT}. They obtained dimension-free refinements in all hyperplane-section cases in which the extremisers were known, i.e. in all cases except for maximal sections for $2 < p < \infty$.

    For the stability of projections, much less was known prior to our work. Only for the minimal projection of $B_1^n$ a stability refinement analogous to the result of \cite{CNT} was explicitly stated. De, Diakonikolas and Servedio proved in \cite{DDS} a robust form of Szarek's inequality from \cite{S}: there is a universal $\kappa>0$ such that
    \begin{align*}
        \E\left|\sum_{i=1}^n a_i\e_i\right| \ge \frac{1}{\sqrt2}+\kappa\left|a - a_{(2)}\right|^2,
    \end{align*}
    where $\e_1, \ldots, \e_n$ are i.i.d. Rademacher random variables, i.e. $\Prb(\e_i = 1) = \Prb(\e_i = -1) = \frac 1 2$. As discussed in \cite{ENT-res}, this is equivalent to the stability of the minimal projection of $B_1^n$.

    Our main result is the following theorem, which together with the aforementioned result establishes a dual counterpart of Theorem 1.2 of \cite{CNT}. Here we denote by $\vol_n$ the $n$-dimensional Lebesgue measure and by $\Proj_{a^{\perp}}K$ the projection of a set $K \subseteq \R^n$ onto a hyperplane orthogonal to a vector $a \in \R^n$.

    \begin{thm}\label{main}
        There exists a positive constant $c_p$ depending only on $p$ such that for every $n \ge 1$ and every vector $a = (a_1, \ldots, a_n) \in \R^n$ with $\sum_{i=1}^na_i^2 = 1$ and $a_1 \ge a_2 \ge \ldots \ge a_n \ge 0$ we have:\begin{enumerate}
            \item[(a)] if $2 < p \le \infty$, then
            \begin{align*}
                \frac{\vol_{n-1}\left(\Proj_{a^{\perp}}B_p^n\right)}{\vol_{n-1}\left(\Proj_{a_{(1)}^{\perp}}B_p^n\right)} \ge 1 + c_p\left|a - a_{(1)}\right|^2;
            \end{align*}
            \item[(b)] if $p = \infty$, then
            \begin{align*}
                \frac{\vol_{n-1}\left(\Proj_{a^{\perp}}B_p^n\right)}{\vol_{n-1}\left(\Proj_{a_{(n)}^{\perp}}B_p^n\right)} \le 1 - \frac{c_{\infty}}{n}\sum_{i, j = 1}^n\left(a_i - a_j\right)^2;
            \end{align*}
            \item[(c)] if $2 < p < \infty$, then
            \begin{align*}
                \frac{\vol_{n-1}\left(\Proj_{a^{\perp}}B_p^n\right)}{\vol_{n-1}\left(\Proj_{a_{(n)}^{\perp}}B_p^n\right)} \le 1 - c_p\sum_{i=1}^n\left(a_i^2 - \frac 1 n\right)^2;
            \end{align*}
            \item[(d)] if $1 \le p < 2$, then
            \begin{align*}
                \frac{\vol_{n-1}\left(\Proj_{a^{\perp}}B_p^n\right)}{\vol_{n-1}\left(\Proj_{a_{(1)}^{\perp}}B_p^n\right)} \le 1 - c_p\left|a - a_{(1)}\right|^2.
            \end{align*}
        \end{enumerate}
    \end{thm}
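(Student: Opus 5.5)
We will work throughout with the probabilistic representation of projection volumes. By Cauchy's projection formula, for a unit vector $a$,
\begin{align*}
\vol_{n-1}\left(\Proj_{a^{\perp}}B_p^n\right)=\frac{1}{2}\int_{\partial B_p^n}|\langle a,\nu(x)\rangle|\,d\sigma(x).
\end{align*}
Following Barthe--Naor \cite{BN}, this equals, up to a factor depending only on $n$ and $p$, the quantity $\E\left|\sum_{i=1}^n a_iY_i\right|$. Here $Y_1,\ldots,Y_n$ are i.i.d.\ symmetric random variables with density proportional to $|y|^{q'}e^{-|y|^{q}}$-type weights, where $q$ is the conjugate exponent of $p$. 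For $p=\infty$ the $Y_i$ are Rademacher variables, i.e.\ $Y_i=\e_i$. For $p=1$ the $Y_i$ are uniform on $[-1,1]$ up to scaling. Thus each part of Theorem~\ref{main} becomes a stability statement for $\Phi(a)=\E|\sum a_iY_i|$ around $a_{(1)}$ or $a_{(n)}$.

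The key structural fact we will use is the following.
\begin{itemize}
\item For $p>2$ (so $q<2$), $Y_i$ is a Gaussian mixture: $Y_i=R_iG_i$ with $R_i>0$ independent of the standard Gaussian $G_i$.
\item For $p<2$ we will compare instead with the "reverse'' structure used in \cite{BN}, namely $Y_i$ being a ratio of a Gaussian-type variable.
\end{itemize}
With $Y_i=R_iG_i$ we get $\Phi(a)=\sqrt{2/\pi}\,\E\left(\sum a_i^2R_i^2\right)^{1/2}$. Setting $s_i=a_i^2$ on the simplex, this reduces each part to a statement about the function $F(s)=\E\sqrt{\sum s_iR_i^2}$. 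This function is concave and symmetric, so it is Schur-concave. The minimum is therefore attained at a vertex ($a_{(1)}$) and the maximum at the barycentre ($a_{(n)}$).

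\emph{Part (c).} We will quantify the Schur-concavity. Expanding around $s=(1/n,\ldots,1/n)$, the strict concavity of $\sqrt{\cdot}$ gives a deficit of order $\E\left[(\sum (s_i-\frac1n)R_i^2)^2/(\ldots)^{3/2}\right]$. We will bound the denominator using moment bounds on $\sum R_i^2/n$, which concentrates. This yields a deficit $\gtrsim \sum(s_i-1/n)^2\Var(R^2)$, which is exactly (c) after dividing by $F(1/n,\ldots)\asymp 1$.

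\emph{Part (a).} We will use the strict concavity of $\sqrt{\cdot}$ once more, now relative to the vertex. Write $\sqrt{\sum s_iR_i^2}\ge \sum s_iR_i$ with a gain, and note that $F(e_1)=\E R$. The gain is at least of order $1-s_1\asymp|a-a_{(1)}|^2$. To see this, restrict to the event that two of the $R_i$ with large weights differ substantially. If $s_1\le 1/2$, we instead use a crude dimension-free bound $F(s)\ge F(e_1)(1+c)$, obtained via Khinchine-type comparison: $\E\sqrt{\sum s_iR_i^2}\ge \sqrt{\E R}\ldots$ together with $\E R^2>(\E R)^2$.

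\emph{Part (b), $p=\infty$.} Here $\Phi$ is the Rademacher first moment $\E|\sum a_i\e_i|$, and the extremiser is $a_{(n)}$. We will use the Gaussian-mixture structure differently. Write $\E|\sum a_i\e_i|$ via the formula $\E|X|=\frac{2}{\pi}\int_0^\infty (1-\mathrm{Re}\,\phi_X(t))t^{-2}dt$, with $\phi_X(t)=\prod\cos(a_it)$. Then compare $\prod\cos(a_it)$ with $\cos(t/\sqrt n)^n$ through the log-concavity of $\cos$ on the relevant range, handling large $t$ separately.

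\emph{Part (d), $p<2$.} Here $a_{(1)}$ is a maximiser, and we will use the \cite{BN} representation, in which $\Phi(a)=\E\bigl(\sum a_i^2W_i\bigr)^{-1/2}$-type quantities appear. Since $x\mapsto x^{-1/2}$ is convex, the same quantitative Jensen argument as in (a) applies with the signs reversed.

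\emph{Main obstacle.} The hardest step is part (b). Its deficit is of size $\frac1n\sum(a_i-a_j)^2$, which is smaller by a factor $n$ than in the other parts, and discrete Rademacher sums do not admit the clean concavity argument. I would first try to treat it through the characteristic-function integral. The difficulty there is controlling the region of $t$ where some $\cos(a_it)$ becomes negative, which must be done uniformly in $n$.
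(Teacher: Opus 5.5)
\textbf{There is a genuine gap: the endpoint representations are wrong.} You assign the wrong probabilistic models to the endpoint cases. This breaks part (b) and the case $p=\infty$ of (a).

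By the Cauchy projection formula, the facets of $B_1^n$ have normals $n^{-1/2}(\pm1,\dots,\pm1)$. This gives $\vol_{n-1}(\Proj_{a^\perp}B_1^n)/\vol_{n-1}(B_1^{n-1})=\E|\sum_i a_i\e_i|$, so Rademacher variables belong to $p=1$. The surface measure of the cube sits on $\pm e_i$, which gives $\vol_{n-1}(\Proj_{a^\perp}B_\infty^n)/\vol_{n-1}(B_\infty^{n-1})=\sum_i|a_i|$. Uniform variables appear for \emph{sections} of the cube, not projections.

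So in (b) you are aiming at a false inequality. By $\E|\sum_i a_i\e_i|\le(\E(\sum_i a_i\e_i)^2)^{1/2}=1$, the Rademacher first moment is \emph{maximised} at $a_{(1)}$. Already for $n=2$ it equals $1$ at $a_{(1)}$ and $\frac1{\sqrt2}$ at $a_{(n)}$, so no estimate on $\prod_i\cos(a_it)$ can give it. With the correct formula, (b) is the easiest part. Using $\sum_i a_i\le\sqrt n$,
\[
1-\frac{1}{\sqrt n}\sum_i a_i=\frac{1-\frac1n\bigl(\sum_i a_i\bigr)^2}{1+\frac1{\sqrt n}\sum_i a_i}\ge\frac12\Bigl(1-\frac1n\bigl(\sum_i a_i\bigr)^2\Bigr)=\frac1{4n}\sum_{i,j}(a_i-a_j)^2 .
\]
Similarly, your Gaussian-mixture argument does not cover (a) at $p=\infty$, and under your Rademacher identification that inequality would even be reversed. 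It follows at once from $\sum_i a_i-1=\sum_i a_i(1-a_i)\ge(1-\frac1{\sqrt2})(1-a_1^2)$, since $a_i\le\frac1{\sqrt2}$ for $i\ge2$.

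For $2<p<\infty$, your arguments are workable alternatives to the paper's routes. Your Jensen-gap argument for (a) replaces the paper's inclusion of the section in the projection plus the known stability for sections. Your second-order expansion at the barycentre for (c) replaces the paper's two-coordinate lemma telescoped along $T$-transforms. In (c) you must still control $\max(A,B)$ with $B=\sum_i s_iR_i^2$, which does not concentrate. A truncation combined with a fourth-moment (Paley--Zygmund) bound for $B-A=\sum_i(s_i-\frac1n)(R_i^2-\E R^2)$ handles this.

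\textbf{Part (d) has no working mechanism.} For $1\le p<2$ the variables in the Barthe--Naor formula are not Gaussian mixtures. At $p=1$ they are Rademacher. For $1<p<2$ their density vanishes at $0$, whereas a Gaussian mixture has its density maximal at $0$. Moreover, $\E(\sum_i a_i^2W_i)^{-1/2}$ is the Gaussian-mixture expression for the density of $\sum_i a_iX_i$ at $0$, i.e.\ for sections, not for $\E|\sum_i a_iX_i|$. So there is nothing to run a reversed Jensen argument on.

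The missing idea is the convex (Choquet) domination $X\preceq G$ of Barthe--Naor, where $G$ is a centred Gaussian normalised so that $\E|G|=\E|X|=1$. At $p=1$ this is just $G\sim|G|\e$ plus Jensen. It gives $\E|\sum_i a_iX_i|\le\E|\sum_i a_iG_i|=1$. To quantify it:
\begin{enumerate}
\item Swap $X_i$ for $G_i$ one coordinate at a time; each swap is non-negative by convexity of $x\mapsto|x+t|$.
\item Extract the whole deficit from the first swap when $a_1\ge c_p'$. Let $Z=\sum_{i\ge2}a_iG_i$, which is Gaussian with variance $\frac\pi2(1-a_1^2)$, and use $g-f_p\ge b_p>0$ near $0$. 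This gives $\E|a_1G_1+Z|-\E|a_1X_1+Z|\gtrsim a_1^{-1}\E Z^2\1_{|Z|\le c_pa_1}\gtrsim 1-a_1$.
\item When $a_1<c_p'$, use Berry--Esseen together with subgaussian tails. They show $\E|\sum_i a_iX_i|$ is close to $\sqrt{2/\pi}\,(\E X^2)^{1/2}$, which is strictly below $1$ because $\E X^2<\frac\pi2$.
\end{enumerate}
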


    \begin{xrem}
        Some parts of Theorem \ref{main} ((a), (b), case $p = 1$ in (d)) are easy consequences of previously known or folklore results. However, since (up to our best knowledge) they are nowhere explicitly stated as stability estimates for projections, we decided to include them in our main result. 
    \end{xrem}

    \subsection{Probabilistic representation}

    At the core of our methods is a probabilistic representation of the volume of projections, reducing our problem, in most cases, to the stability of certain moment-comparison inequalities. The most important result for our purposes is the following theorem of Barthe and Naor from \cite{BN}, which states that for $1 < p < \infty$ the formula
    \begin{align}\label{prob-gen}
        \frac{\vol_{n-1}\left(\Proj_{a^{\perp}}B_p^n\right)}{\vol_{n-1}\left(B_p^{n-1}\right)} = \frac{\E\left|\sum_{i=1}^na_iX_i\right|}{\E|X_1|}
    \end{align}
    holds, where $X_1, \ldots, X_n$ are i.i.d. random variables with density proportional to $|t|^{\frac{2-p}{p-1}}e^{-|t|^{\frac{p}{p-1}}}$. The limiting cases $p = 1$ and $p = \infty$ can be obtained by a standard application of Cauchy Projection Formula, which is also the starting point of Barthe's and Naor's proof of \eqref{prob-gen}. This formula states that
    \begin{align}\label{cauchy}
        \vol_{n-1}\left(\Proj_{a^{\perp}}K\right) = \frac 1 2\int_{\mathbb{S}^{n-1}}|\langle a, \theta \rangle|d\sigma_K(\theta),
    \end{align}
    where $\mathbb{S}^{n-1}$ is the unit Euclidean sphere in $\R^n$ and $\sigma_K$ is the surface area measure of $K$. By a direct application of \eqref{cauchy} for $K = B_1^n$ and $K = B_{\infty}^n$ we get
    \begin{align}\label{prob-1}
        \frac{\vol_{n-1}\left(\Proj_{a^{\perp}}B_1^n\right)}{\vol_{n-1}\left(B_1^{n-1}\right)} = \E\left|\sum_{i=1}^na_i\e_i\right|,
    \end{align}
    where $\e_1, \ldots, \e_n$ are i.i.d. Rademacher random variables, and
    \begin{align}\label{prob-inf}
        \frac{\vol_{n-1}\left(\Proj_{a^{\perp}}B_{\infty}^n\right)}{\vol_{n-1}\left(B_{\infty}^{n-1}\right)} = \sum_{i=1}^n|a_i|,
    \end{align}
    respectively.

    \subsection{Notation and organization of the paper}

    By $\E_X$, $\E_Y$, etc. we denote expectation with respect to the random variables $X$ or $X_i$, $Y$ or $Y_i$ etc., using this notation only if we want to emphasize which random variables are fixed and which are randomized. We use the same convention for $\Prb_X$, $\Prb_Y$, etc. The notation may vary across different parts of the paper. In particular, the letters $c$, $C$, $c_p$ and other letters denoting various constants may change their meaning multiple times. We shall not track the explicit constants, but we shall try to make clear on which parameters each constant depends.

    The paper is organized as follows. In Section 2 we write down the proofs of easy parts of Theorem \ref{main}, i.e. parts (a) and (b). As remarked after the formulation of Theorem \ref{main}, the case $p = 1$ of part (d) can also be derived easily from previously known results in the literature, but since it is tied to the general case of part (d), we defer it to Section 4, where all of part (d) is proved. In Section 3 we derive Theorem \ref{main}(c) from a more general result on Gaussian mixtures, which we also prove there. Finally, in Section 5 we give a few remarks comparing parts (b) and (c) of Theorem \ref{main}.

    \section{The easy parts}

    We begin with a very short proof of Theorem \ref{main}(a).

    \begin{proof}[Proof of Theorem \ref{main}(a)]
        Clearly we have $\vol_{n-1}\left(\Proj_{a^{\perp}}B_p^n\right) \ge \vol_{n-1}\left(a^{\perp} \cap B_p^n\right)$ for any vector $a \in \R^n$. Moreover, the equality holds for $a = a_{(1)}$. Thus
        \begin{align*}
            \frac{\vol_{n-1}\left(\Proj_{a^{\perp}}B_p^n\right)}{\vol_{n-1}\left(\Proj_{a_{(1)}^{\perp}}B_p^n\right)} \ge \frac{\vol_{n-1}\left(B_p^n \cap a^{\perp}\right)}{\vol_{n-1}\left(B_p^{n-1}\right)} \ge 1 + c_p\left|a - a_{(1)}\right|^2,
        \end{align*}
        where the second inequality was proved in \cite{CNT}.
    \end{proof}

    Now we shall prove Theorem \ref{main}(b), which after applying formula \eqref{prob-inf} essentially reduces to a variance computation for a suitable random variable. This is a classical argument, but for clarity we give it in full detail.

    \begin{proof}[Proof of Theorem \ref{main}(b)]
        We know by the formula \eqref{prob-inf} that
        \begin{align*}
            \frac{\vol_{n-1}\left(\Proj_{a^{\perp}}B_p^n\right)}{\vol_{n-1}\left(\Proj_{a_{(n)}^{\perp}}B_p^n\right)} = \frac{\sum_{i=1}^na_i}{\sum_{i=1}^n\frac{1}{\sqrt{n}}} = \frac{1}{\sqrt{n}}\sum_{i=1}^na_i,
        \end{align*}
        hence we have
        \begin{align*}
            1 - \frac{\vol_{n-1}\left(\Proj_{a^{\perp}}B_p^n\right)}{\vol_{n-1}\left(\Proj_{a_{(n)}^{\perp}}B_p^n\right)} = 1 - \frac{1}{\sqrt{n}}\sum_{i=1}^na_i = \frac{1 - \frac 1 n\left(\sum_{i=1}^na_i\right)^2}{1 + \frac{1}{\sqrt{n}}\sum_{i=1}^na_i}.
        \end{align*}
        Clearly $1 + \frac{1}{\sqrt{n}}\sum_{i=1}^na_i \le 2$, hence
        \begin{align}\label{simplestep}
            1 - \frac{1}{\sqrt{n}}\sum_{i=1}^na_i \ge \frac 1 2 \left(1 - \frac 1 n\left(\sum_{i=1}^na_i\right)^2\right) = \frac n 2\left(\frac 1 n\sum_{i=1}^na_i^2 - \left(\frac 1 n\sum_{i=1}^na_i\right)^2\right).
        \end{align}
        The RHS of \eqref{simplestep} is equal to $\frac n 2\Var(X)$, where $X$ is a random variable uniformly distributed on the multiset $\{a_1, \ldots, a_n\}$. Let $Y$ be an independent copy of $X$. Then, by a known characterization of the variance we have
        \begin{align*}
            1 - \frac{\vol_{n-1}\left(\Proj_{a^{\perp}}B_p^n\right)}{\vol_{n-1}\left(\Proj_{a_{(n)}^{\perp}}B_p^n\right)} \ge \frac n 2\Var(X) = \frac n 4\E(X - Y)^2 = \frac{1}{4n}\sum_{i, j = 1}^n\left(a_i - a_j\right)^2,
        \end{align*}
        which completes the proof.
    \end{proof}

    \begin{rem}\label{opt}
        The deficit in Theorem \ref{main}(b) is optimal, up to a factor of 2, since $1 \le 1 + \frac{1}{\sqrt{n}}\sum_{i=1}^na_i$ and thus
        \begin{align*}
            1 - \frac 1 n\left(\sum_{i=1}^na_i\right)^2 \ge 1 - \frac{1}{\sqrt{n}}\sum_{i=1}^na_i \ge \frac 1 2 \left(1 - \frac 1 n\left(\sum_{i=1}^na_i\right)^2\right).
        \end{align*}
    \end{rem}

    \section{Proof of Theorem \ref{main}(c)}

    By formula \eqref{prob-gen}, the statement is equivalent to
    \begin{align*}
        \frac{\E\left|\sum_{i=1}^na_iX_i\right|}{\E\left|\frac{1}{\sqrt{n}}\sum_{i=1}^na_iX_i\right|} \le 1 - c_p\sum_{i=1}^n\left(a_i^2 - \frac 1 n\right)^2
    \end{align*}
    or
    \begin{align}\label{c-red1}
        \E\left|\sum_{i=1}^na_iX_i\right| \le \frac{1}{\sqrt{n}}\E\left|\sum_{i=1}^nX_i\right| - \frac{c_p}{\sqrt{n}}\E\left|\sum_{i=1}^nX_i\right|\sum_{i=1}^n\left(a_i^2 - \frac 1 n\right)^2,
    \end{align}
    where $X_i$ are i.i.d. random variables with densities proportional to $|t|^{\frac{2-p}{p-1}}e^{-|t|^{\frac{p}{p-1}}}$. By the Central Limit Theorem and uniform boundedness of the second moments we have that $\lim_{n \to \infty}\frac{1}{\sqrt{n}}\E\left|\sum_{i=1}^nX_i\right| = \E|G|$, where $G$ is a Gaussian random variable with $\E G = 0$ and $\E G^2 = \E X_1^2$. Moreover, the sequence $\frac{1}{\sqrt{n}}\E\left|\sum_{i=1}^nX_i\right|$ is non-decreasing by Theorem 10 in \cite{BN}, which implies that $\frac{1}{\sqrt{n}}\E\left|\sum_{i=1}^nX_i\right| \le \E|G|$. Since $\E|G|$ depends only on $p$, to show the inequality \eqref{c-red1} it suffices to prove that
    \begin{align}\label{c-red2}
        \frac{1}{\sqrt{n}}\E\left|\sum_{i=1}^nX_i\right| - \E\left|\sum_{i=1}^na_iX_i\right| \ge C_p\sum_{i=1}^n\left(a_i^2 - \frac 1 n\right)^2
    \end{align}
    for $C_p = c_p\E|G|$. 

    We shall further reduce \eqref{c-red2} by using several structural properties observed by Eskenazis, Nayar and Tkocz in \cite{ENT-mix}. The first is that the variables $X_i$ are \textit{Gaussian mixtures}.

    \begin{df}
        A random variable $X$ is called a Gaussian mixture if there exists a positive random variable $Y$ and a standard Gaussian random variable $Z$, independent of $Y$, such that $X$ has the same distribution as the product $YZ$.
    \end{df}

    \begin{prop}[proved in \cite{ENT-mix}]\label{mixtura}
        The random variables $X_i$ are Gaussian mixtures.
    \end{prop}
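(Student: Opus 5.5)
The plan is to use the standard characterization of Gaussian mixtures with a density: a symmetric random variable with density $f(t)=g(t^2)$, where $g$ is completely monotone on $(0,\infty)$, is a Gaussian mixture. We work in the regime of Theorem \ref{main}(c), i.e. $2<p<\infty$. Set $q=\frac{p}{p-1}\in(1,2)$ and $\alpha=\frac{p-2}{p-1}\in(0,1)$. Then the density of $X_i$ is proportional to $|t|^{-\alpha}e^{-|t|^q}$, which has the form $g(t^2)$ with
\begin{align*}
    g(s)=s^{-\alpha/2}e^{-s^{q/2}}.
\end{align*}

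First we show $g$ is completely monotone on $(0,\infty)$; we treat the two factors separately. For $s^{-\alpha/2}$ with $\alpha/2>0$, the Gamma-function identity gives
\begin{align*}
    s^{-\alpha/2}=\frac{1}{\Gamma(\alpha/2)}\int_0^\infty e^{-us}u^{\alpha/2-1}\,du,
\end{align*}
so this factor is completely monotone. For $e^{-s^{q/2}}$, note that $q/2\in(0,1)$, so $\phi(s)=s^{q/2}$ is a positive function whose derivative is completely monotone (a Bernstein function). The composition $e^{-\phi}$ is then completely monotone by the classical criterion, which one can also check directly by induction on the number of derivatives. Finally, a product of completely monotone functions is completely monotone, by the Leibniz rule. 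Hence $g$ is completely monotone.

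Next, Bernstein's theorem gives a positive Borel measure $\nu$ on $[0,\infty)$ with $g(s)=\int e^{-us}\,d\nu(u)$. Consequently
\begin{align*}
    f(t)=\int_{(0,\infty)}\sqrt{\pi/u}\;\frac{1}{\sqrt{2\pi}\,\sigma_u}e^{-t^2/(2\sigma_u^2)}\,d\nu(u),\qquad \sigma_u^2=\frac{1}{2u}.
\end{align*}
Here $\nu$ has no atom at $0$, since $g(s)\to 0$ as $s\to\infty$. Integrating in $t$ and using Tonelli, the measure $\mu(du)=\sqrt{\pi/u}\,d\nu(u)$ has total mass $\int f=1$, so it is a probability measure. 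Thus $X_i$ has the law of $YZ$, where $Y=\sigma_U$ with $U\sim\mu$ independent of a standard Gaussian $Z$. Since $\sigma_u>0$ for every $u$, $Y$ is positive.

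The only step with real content is the complete monotonicity of $e^{-s^{q/2}}$ for $q<2$, which is exactly where the restriction $p>2$ enters. For $p<2$ we would have $q>2$, and $e^{-|t|^q}$ is not a Gaussian mixture. I would first cite the Bernstein-function composition lemma. Failing that, I would use the representation of $e^{-s^{\beta}}$, $\beta\in(0,1)$, as the Laplace transform of a one-sided $\beta$-stable density.
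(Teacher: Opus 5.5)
Your proof is correct and follows essentially the same route as the argument of Eskenazis, Nayar and Tkocz, to which the paper defers. That argument uses the Bernstein-theorem criterion that $s\mapsto f(\sqrt{s})$ be completely monotone, verified exactly as you do: write it as the product of the completely monotone power $s^{-\alpha/2}$ and $e^{-s^{q/2}}$, the latter being the composition of $e^{-x}$ with the Bernstein function $s^{q/2}$.

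One small correction to your closing aside: for $1<p<2$ the power factor $s^{(2-p)/(2(p-1))}$ is already increasing. Indeed the density then vanishes at $0$, whereas a Gaussian mixture's density is maximal there. So the restriction $p>2$ is needed for both factors, not only the exponential one.
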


    The second structural property observed in \cite{ENT-mix} and earlier in \cite{AH} is Schur monotonicity of moments of weighted sums of Gaussian mixtures. Recall that a vector $a = (a_1, \ldots, a_n)$ is dominated by a vector $b = (b_1, \ldots, b_n)$ in the Schur order (which is denoted by $a \preceq b$) if
    \begin{align*}
        \sum_{i=1}^na_i = \sum_{i=1}^nb_i \quad \text{and} \quad \forall_{1 \le k < n}\sum_{i=1}^ka_i^* \le \sum_{i=1}^kb_i^*,
    \end{align*}
    where $a_1^*, \ldots, a_n^*$ and $b_1^*, \ldots, b_n^*$ are the non-increasing rearrangements of the coordinates of $a$ and $b$, respectively. Theorem 3 in \cite{ENT-mix} (see also Proposition 2.6 in \cite{AH}) shows that if $X_1, \ldots, X_n$ are i.i.d. Gaussian mixtures, then the function $\left(a_1^2, \ldots, a_n^2\right) \mapsto \E\left|\sum_{i=1}^na_iX_i\right|$ is decreasing with respect to the Schur order (in fact the results of \cite{ENT-mix} and \cite{AH} cover more general moments and functions, not only the first moments, but we shall work only with the first moments). We shall prove the following generalization of this result.

    \begin{thm}\label{mixthm}
        Let $X_1, \ldots, X_n$ be i.i.d. square integrable Gaussian mixtures whose distribution is not Gaussian. Then there exists a positive constant $C$, depending only on the distribution of $X_1$, such that for any real numbers $a_1, \ldots, a_n$, $b_1, \ldots, b_n$ satisfying $\sum_{i=1}^na_i^2 = \sum_{i=1}^nb_i^2 = 1$ and $\left(b_1^2, \ldots, b_n^2\right) \preceq \left(a_1^2, \ldots, a_n^2\right)$ we have
        \begin{align*}
            \E\left|\sum_{i=1}^nb_iX_i\right| - \E\left|\sum_{i=1}^na_iX_i\right| \ge C\left(\sum_{i=1}^na_i^4 - \sum_{i=1}^nb_i^4\right).
        \end{align*}
    \end{thm}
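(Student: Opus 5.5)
We work with the Gaussian mixture structure. Write $X_i = Y_iZ_i$ with $Y_i$ i.i.d. positive and $Z_i$ i.i.d. standard Gaussians independent of the $Y_i$'s. Conditioning on the $Y$'s, $\sum a_iX_i$ is a centered Gaussian with variance $\sum a_i^2Y_i^2$. Hence
\begin{align*}
    \E\left|\sum_{i=1}^na_iX_i\right| = \sqrt{\tfrac{2}{\pi}}\,\E\left(\sum_{i=1}^n x_iY_i^2\right)^{1/2} =: \Phi(x), \qquad x_i = a_i^2 .
\end{align*}
The function $\Phi$ is symmetric and concave on the simplex, which is the source of Schur monotonicity. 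Set $x_i=a_i^2$ and $y_i=b_i^2$. The goal becomes
\begin{align*}
    \Phi(y)-\Phi(x)\ge C\left(\sum x_i^2-\sum y_i^2\right) \quad \text{whenever } y\preceq x .
\end{align*}

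The first step is a standard reduction. If $y\preceq x$, then $y$ is reachable from $x$ by a finite chain of Robin Hood (T-) transforms. Each transform replaces two coordinates $x_i > x_j$ by $x_i-\delta$ and $x_j+\delta$ with $0<\delta\le (x_i-x_j)/2$, and leaves the others fixed. Both sides of the inequality are additive along such a chain: the left telescopes, and so does $\sum x_k^2$. Moreover each step strictly decreases $\sum x_k^2$. So it suffices to prove the inequality for a single T-transform. For one such step,
\begin{align*}
    \sum x_k^2-\sum y_k^2 = 2\delta(x_i-x_j-\delta).
\end{align*}

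The second step handles a single T-transform. Let $S = \sum_{k\ne i,j} x_kY_k^2$, and write $u=x_i+x_j$ and $x_i = u/2 + t$. Consider
\begin{align*}
    g(t) = \E\left(S + (\tfrac u2+t)Y_i^2 + (\tfrac u2-t)Y_j^2\right)^{1/2},
\end{align*}
which is even and concave in $t$. We need $g(t-\delta)-g(t)\ge c\,\delta(2t-\delta)$. Integrating $g'$ from $t-\delta$ to $t$, this follows from a uniform bound of the form $-g'(s)\ge 2c\,s$ for $s\in[0,1/2]$. Since $g'(0)=0$, that in turn follows from $-g''(s)\ge 2c$. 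Computing,
\begin{align*}
    -g''(s) = \tfrac14\,\E\left[(Y_i^2-Y_j^2)^2\left(S+\ldots\right)^{-3/2}\right].
\end{align*}
The main obstacle is to bound this from below uniformly in $n$, in the weights, and in $s$. Because the distribution is not Gaussian, $Y$ is not a.s. constant, so there are levels $\alpha<\beta$ with $\Prb(Y^2<\alpha)$ and $\Prb(Y^2>\beta)$ both positive. On the event $\{Y_i^2>\beta, Y_j^2<\alpha\}$ we have $(Y_i^2-Y_j^2)^2\ge(\beta-\alpha)^2$. On the same event, the bracket is at most $S + Y_i^2 + Y_j^2$, since the coefficients are at most $1$.

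The remaining issue is controlling $S$ and $Y_i^2$ from above. This is where square integrability enters. Since $\E S\le \E Y^2$ (weights sum to at most $1$), Markov's inequality gives $\Prb(S\le 2\E Y^2/\eta)\ge 1-\eta$. We also restrict $Y_i^2$ to a bounded window $(\beta,M]$ that still has positive probability. By independence of $S$, $Y_i$ and $Y_j$, the expectation is then at least
\begin{align*}
    \tfrac14(\beta-\alpha)^2\,p_1p_2(1-\eta)\left(2\E Y^2/\eta+M+\alpha\right)^{-3/2} =: 2c>0,
\end{align*}
where $p_1 = \Prb(\beta<Y^2\le M)$ and $p_2 = \Prb(Y^2<\alpha)$. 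This constant depends only on the law of $Y$. One must double-check the case $s$ near $u/2$, where the coefficient of $Y_j^2$ vanishes; this is harmless since the bound used only that the coefficients are in $[0,1]$. Combining the steps gives the claim with $C=\sqrt{2/\pi}\,c$. Since $\sum x_k^2-\sum y_k^2\le 1$ anyway, no smallness of $\delta$ is needed.
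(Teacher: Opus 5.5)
Your proof is correct and takes essentially the same route as the paper. You reduce via the Gaussian-mixture structure to $\E\bigl(\sum_i x_iY_i^2\bigr)^{1/2}$, telescope over T-transforms, and get a second-order lower bound for a single transform on an event of positive probability together with Markov's inequality for $S$; your evenness-plus-strong-concavity argument for $g$ is a repackaging of the paper's symmetrisation $D\sim|D|\e$ and double-integral formula in Lemma~\ref{mixlem}.

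Two small points to tidy. First, apply the Taylor formula with integral remainder pointwise and then use Tonelli, rather than differentiating twice under $\E$, which may lack domination near $s=u/2$ when $S$ can be small. Second, your final constant is $\sqrt{2/\pi}\,c/2$ rather than $\sqrt{2/\pi}\,c$.
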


    Observe that since $\sum_{i=1}^n\left(a_i^2 - \frac 1 n\right)^2 = \sum_{i=1}^na_i^4 - \frac 2 n\sum_{i=1}^na_i^2 + \frac 1 n = \sum_{i=1}^na_i^4 - \sum_{i=1}^n\frac{1}{n^2}$, taking $b_i = \frac{1}{\sqrt{n}}$ in Theorem \ref{mixthm} immediately implies \eqref{c-red2}.

    We shall need the following lemma.

    \begin{lem}\label{mixlem}
        Let $Y_1$, $Y_2$ be i.i.d. square integrable random variables whose distribution is not a Dirac delta distribution. Let $a_1$, $a_2$, $b_1$, $b_2$, $s$ also be real numbers such that $s \ge 0$, $a_1^2 + a_2^2 = b_1^2 + b_2^2 = \sigma^2 > 0$ and $a_1^2 \ge b_1^2 \ge b_2^2 \ge a_2^2$. Then there exist positive constants $c$, $\eta$, depending only on the distribution of $Y_1$, such that
        \begin{align*}
            \E\sqrt{b_1^2Y_1^2 + b_2^2Y_2^2 + s} - \E\sqrt{a_1^2Y_1^2 + a_2^2Y_2^2 + s} \ge c\left(s + \eta\sigma^2\right)^{-\frac 3 2}\left(a_1^4 + a_2^4 - b_1^4 - b_2^4\right).
        \end{align*}
    \end{lem}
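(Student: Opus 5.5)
We interpolate between $(b_1^2,b_2^2)$ and $(a_1^2,a_2^2)$ along the segment with fixed sum $\sigma^2$. Write $u_1=Y_1^2$, $u_2=Y_2^2$, and for $t\in[b_1^2,a_1^2]$ set
\begin{align*}
F(t)=\E\sqrt{t u_1+(\sigma^2-t)u_2+s}.
\end{align*}
The left-hand side of the lemma is $F(b_1^2)-F(a_1^2)$. Note also that
\begin{align*}
a_1^4+a_2^4-b_1^4-b_2^4=\bigl(a_1^2-a_2^2\bigr)^2/2-\bigl(b_1^2-b_2^2\bigr)^2/2=2\int_{b_1^2}^{a_1^2}(2t-\sigma^2)\,dt,
\end{align*}
so it suffices to prove the pointwise derivative bound
\begin{align*}
-F'(t)\ge c'\,(2t-\sigma^2)\,(s+\eta\sigma^2)^{-3/2}\qquad\text{for all } t\ge\sigma^2/2,
\end{align*}
and then integrate over $t\in[b_1^2,a_1^2]$.

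To get the derivative bound, differentiate under the expectation (justified by square integrability and dominated convergence):
\begin{align*}
F'(t)=\tfrac12\E\bigl[(u_1-u_2)\,\phi(tu_1+(\sigma^2-t)u_2)\bigr],\qquad \phi(x)=(x+s)^{-1/2}.
\end{align*}
Symmetrise using exchangeability of $(u_1,u_2)$. Put $x=tu_1+(\sigma^2-t)u_2$ and $y=tu_2+(\sigma^2-t)u_1$, so that $x-y=(2t-\sigma^2)(u_1-u_2)$. Then
\begin{align*}
F'(t)=\tfrac14\E\bigl[(u_1-u_2)\bigl(\phi(x)-\phi(y)\bigr)\bigr].
\end{align*}
Since $\phi$ is decreasing, the integrand is nonpositive. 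By the mean value theorem,
\begin{align*}
\phi(y)-\phi(x)=\tfrac12(\xi+s)^{-3/2}(x-y)
\end{align*}
for some $\xi$ between $x$ and $y$. Both $x$ and $y$ are at most $\sigma^2\max(u_1,u_2)$. Hence
\begin{align*}
-F'(t)\ge \tfrac18(2t-\sigma^2)\,\E\Bigl[(u_1-u_2)^2\bigl(s+\sigma^2\max(u_1,u_2)\bigr)^{-3/2}\Bigr].
\end{align*}

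It remains to bound this expectation from below, uniformly in $s$ and $\sigma$. This is the main point, and it is where the non-Dirac hypothesis enters. Since $Y_1$ is not a Dirac delta, I expect to be able to choose $M$ (depending only on the law of $Y_1$) such that $\delta:=\E\bigl[(u_1-u_2)^2\mathbf 1_{\{\max(u_1,u_2)\le M\}}\bigr]>0$.

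One must be careful here: the law of $Y_1$ could be symmetric, e.g.\ a Rademacher variable, in which case $Y_1^2$ is constant. Then $u_1=u_2$ and the lemma's right-hand side must vanish. I would handle this by checking the statement in the intended application, where $Y=$ the mixing variable is positive and non-degenerate. So I interpret the hypothesis as saying that $Y_1^2$ is non-degenerate, which holds for positive non-Dirac $Y_1$.

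Restricting to the event $\{\max(u_1,u_2)\le M\}$ gives
\begin{align*}
-F'(t)\ge \tfrac{\delta}{8}(2t-\sigma^2)(s+M\sigma^2)^{-3/2}.
\end{align*}
Integrating, with $\eta=M$ and $c=\delta/32$, yields the claim.
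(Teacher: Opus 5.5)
Your proof is correct and follows essentially the same route as the paper: your exchangeability symmetrisation is the paper's use of $D=Y_1^2-Y_2^2\sim|D|\e$, integrating your $-F'(t)$ over $t$ gives exactly the paper's intermediate identity for $\Delta$ after the substitution $t=\sigma^2\left(u+\frac12\right)$, and your mean-value bound on the event $\{\max(u_1,u_2)\le M\}$ plays the role of the paper's restriction to $w\in[-u,0]$ and to the event $\{|D|\ge\delta,\ R\le\eta\}$. Your caveat is also well founded: if $Y_1^2$ is a.s.\ constant (e.g.\ $Y_1$ Rademacher), the left-hand side vanishes, so the lemma genuinely needs $Y_1^2$ to be non-degenerate --- the paper's claim that $\Prb(|D|\ge\delta, R\le\eta)>0$ follows from the non-Dirac hypothesis silently relies on this, and it does hold in the application to Theorem~\ref{mixthm}, where the mixing variable is positive and non-constant.
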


    \begin{proof}[Proof of Lemma \ref{mixlem}]
        Our idea is to decompose $a_1^2Y_1^2 + a_2^2Y_2^2$ into a symmetric and an antisymmetric part and then use elementary calculus together with some crude bounds. Denote $\Delta \coloneqq \E\sqrt{b_1^2Y_1^2 + b_2^2Y_2^2 + s} - \E\sqrt{a_1^2Y_1^2 + a_2^2Y_2^2 + s}$. Define numbers $\rho \ge \mu \ge \frac 1 2$ such that $a_1^2 = \rho\sigma^2$ and $b_1^2 = \mu\sigma^2$. Define also random variables $R = \frac{Y_1^2 + Y_2^2}{2}$ and $D = Y_1^2 - Y_2^2$. Note that $D$ is a symmetric random variable and hence $D$ has the same distribution as $|D|\e$, where $\e$ is a Rademacher random variable independent of $(Y_1, Y_2)$. We have
        \begin{align*}
            \E\sqrt{b_1^2Y_1^2 + b_2^2Y_2^2 + s} &= \E\sqrt{s + \sigma^2R + \left(\mu - \frac 1 2\right)\sigma^2D} = \E_{\e}\E_{R, D}\sqrt{s + \sigma^2R + \left(\mu - \frac 1 2\right)\sigma^2|D|\e} \\ &= \frac 1 2\E\left[\sqrt{s + \sigma^2R + \left(\mu - \frac 1 2\right)\sigma^2|D|} + \sqrt{s + \sigma^2R - \left(\mu - \frac 1 2\right)\sigma^2|D|}\right]
        \end{align*}
        and in the same manner
        \begin{align*}
            \E\sqrt{a_1^2Y_1^2 + a_2^2Y_2^2 + s} = \frac 1 2\E\left[\sqrt{s + \sigma^2R + \left(\rho - \frac 1 2\right)\sigma^2|D|} + \sqrt{s + \sigma^2R - \left(\rho - \frac 1 2\right)\sigma^2|D|}\right].
        \end{align*}
        Subtracting this and writing the differences of square roots as integrals of the derivatives we get
        \begin{align*}
            \Delta &= \frac 1 4\E\left[\int_{-\sigma^2|D|\left(\rho - \frac 1 2\right)}^{-\sigma^2|D|\left(\mu - \frac 1 2\right)}\frac{dt}{\sqrt{s + \sigma^2R + t}} - \int_{\sigma^2|D|\left(\mu - \frac 1 2\right)}^{\sigma^2|D|\left(\rho - \frac 1 2\right)}\frac{dt}{\sqrt{s + \sigma^2R + t}}\right] \\ &= \frac 1 4\E\int_{\sigma^2|D|\left(\mu - \frac 1 2\right)}^{\sigma^2|D|\left(\rho - \frac 1 2\right)}\left(\frac{1}{\sqrt{s + \sigma^2R - t}} - \frac{1}{\sqrt{s + \sigma^2R + t}}\right)dt \\ &= \frac{\sigma^2}{4}\E|D|\int_{\mu - \frac 1 2}^{\rho - \frac 1 2}\left(\frac{1}{\sqrt{s + \sigma^2R - \sigma^2|D|u}} - \frac{1}{\sqrt{s + \sigma^2R + \sigma^2|D|u}}\right)du \\ &= \frac{\sigma^2}{8}\E|D|\int_{\mu - \frac 1 2}^{\rho - \frac 1 2}\int_{-\sigma^2|D|u}^{\sigma^2|D|u}\left(s + \sigma^2R + v\right)^{-\frac 3 2}dvdu \\ &= \frac{\sigma^4}{8}\E|D|^2\int_{\mu - \frac 1 2}^{\rho - \frac 1 2}\int_{-u}^u\left(s + \sigma^2R + \sigma^2|D|w\right)^{-\frac 3 2}dwdu.
        \end{align*}
        For $-u \le w \le 0$ we may bound $\left(s + \sigma^2R + \sigma^2|D|w\right)^{-\frac 3 2} \ge \left(s + \sigma^2R\right)^{-\frac 3 2}$. Moreover, since the distribution of $Y_1$ is not a Dirac delta distribution and $Y_2$ is an independent copy of $Y_1$, we have that $\Prb(|D| \ge \delta, R \le \eta) > 0$ for some $\delta, \eta > 0$. Thus we bound
        \begin{align*}
            \Delta &\ge \frac{\sigma^4}{8}\E\1_{|D| \ge \delta, R \le \eta}|D|^2\int_{\mu - \frac 1 2}^{\rho - \frac 1 2}\int_{-u}^0\left(s + \sigma^2R + \sigma^2|D|w\right)^{-\frac 3 2}dwdu \\ &\ge \frac{\sigma^4}{8}\int_{\mu - \frac 1 2}^{\rho - \frac 1 2}\int_{-u}^0\E\1_{|D| \ge \delta, R \le \eta}|D|^2\left(s + \sigma^2R\right)^{-\frac 3 2}dwdu \\ &\ge \frac{\sigma^4}{8}\Prb(|D| \ge \delta, R \le \eta)\delta^2\left(s + \eta\sigma^2\right)^{-\frac 3 2}\int_{\mu - \frac 1 2}^{\rho - \frac 1 2}\int_{-u}^01dwdu \\ &= c\left(s + \eta\sigma^2\right)^{-\frac 3 2}\sigma^4\left(\left(\rho - \frac 1 2\right)^2 - \left(\mu - \frac 1 2\right)^2\right) \\ &= c\left(s + \eta\sigma^2\right)^{-\frac 3 2}\sigma^4\frac{\rho^2 + (1 - \rho)^2 - \mu^2 - (1 - \mu)^2}{2} = \frac 1 2c\left(s + \eta\sigma^2\right)^{-\frac 3 2}\left(a_1^4 + a_2^4 - b_1^4 - b_2^4\right),
        \end{align*}
        which completes the proof.
    \end{proof}

    \begin{proof}[Proof of Theorem \ref{mixthm}]
         We start in a manner similar to the proof of Theorem 3 in \cite{ENT-mix}. Let $Y_1, \ldots, Y_n, Z_1, \ldots, Z_n, Z$ be random variables such that $Y_i$ are positive, $Z$, $Z_i$ are standard Gaussians, $X_i$ has the same distribution as $Y_iZ_i$ for all $i$, and variables $X_1, \ldots, X_n$, $Y_1, \ldots, Y_n$, $Z_1, \ldots, Z_n$, $Z$ are independent. Then we have
        \begin{align*}
            \E\left|\sum_{i=1}^na_iX_i\right| = \E\left|\sum_{i=1}^na_iY_iZ_i\right| = \E_Y\E_Z\left|\left(\sum_{i=1}^na_i^2Y_i^2\right)^{\frac 1 2}Z\right| = \E|Z| \cdot \E\sqrt{\sum_{i=1}^na_i^2Y_i^2}.
        \end{align*}
        Since $\E|Z| = \sqrt{\frac{2}{\pi}}$ is a universal constant, it suffices to prove that
        \begin{align*}
            \E\sqrt{\sum_{i=1}^nb_i^2Y_i^2} - \E\sqrt{\sum_{i=1}^na_i^2Y_i^2} \ge C\left(\sum_{i=1}^na_i^4 - \sum_{i=1}^nb_i^4\right).
        \end{align*}
        By a known characterization of the Schur order we have $\left(b_1^2, \ldots, b_n^2\right) = b \preceq a = \left(a_1^2, \ldots, a_n^2\right)$ if and only if $b$ can be obtained from $a$ through a composition of finitely many $T$-transformations, where a $T$-transformation is a transformation nearing two coefficients of a vector while preserving their sum and all remaining coefficients. Denote these $T$-transformations by $T_1, \ldots, T_k$, so that $b = T_kT_{k-1} \ldots T_1a$, and denote $T^j = T_jT_{j-1} \ldots T_1$ for $j = 1, 2, \ldots, k$. Then, by a telescoping sum argument, it suffices to prove that
        \begin{align}\label{tt}
            \E\sqrt{\sum_{i=1}^n\left(T^ja\right)_iY_i^2} - \E\sqrt{\sum_{i=1}^n\left(T^{j-1}a\right)_iY_i^2} \ge C\left(\sum_{i=1}^n\left(T^{j-1}a\right)_i^2- \sum_{i=1}^n\left(T^ja\right)_i^2\right)
        \end{align}
        for $j = 1, \ldots, k$. By invariance under permutations and the structure of $T$-transformations, in order to simplify the notation we shall slightly abuse it and write that $T^{j-1}a = \left(a_1^2, \ldots, a_n^2\right)$, $T^ja = \left(b_1^2, b_2^2, a_3^2, \ldots, a_n^2\right)$ and $S = \sum_{i=3}^na_i^2Y_i^2$. Then \eqref{tt} becomes
        \begin{align*}
            \E\sqrt{b_1^2Y_1^2 + b_2^2Y_2^2 + S} - \E\sqrt{a_1^2Y_1^2 + a_2^2Y_2^2 + S} \ge C\left(a_1^4 + a_2^4 - b_1^4 - b_2^4\right).
        \end{align*}
        By Lemma \ref{mixlem} we have
        \begin{align*}
            \E\sqrt{b_1^2Y_1^2 + b_2^2Y_2^2 + S} - \E\sqrt{a_1^2Y_1^2 + a_2^2Y_2^2 + S} \ge c\left(a_1^4 + a_2^4 - b_1^4 - b_2^4\right)\E\left(S + \eta\sigma^2\right)^{-\frac 3 2},
        \end{align*}
        where $\sigma^2 = a_1^2 + a_2^2 = b_1^2 + b_2^2$ and $c$, $\eta$ are positive constants depending only on the distribution of $Y_1$ (and hence only on the distribution of $X_1$). By the Markov inequality we know that
        \begin{align*}
            \Prb\left(S \le 2\E Y_1^2\right) = 1 - \Prb\left(S > 2\E Y_1^2\right) \ge 1 - \frac{\E S}{2\E Y_1^2} = 1 - \frac{1 - \sigma^2}{2} \ge \frac 1 2
        \end{align*}
        and hence
        \begin{align*}
            \E\left(S + \eta\sigma^2\right)^{-\frac 3 2} \ge \Prb\left(S \le 2\E Y_1^2\right)\left(2\E Y_1^2 + \eta\right)^{-\frac 3 2}
        \end{align*}
        since $\sigma^2 \le 1$. As the RHS depends only on the distribution of $Y_1$, this completes the proof.
    \end{proof}

    \section{Proof of Theorem \ref{main}(d)}

    We shall begin by proving the case $p = 1$. Below we present our own proof. In the remark after it we explain how the case $p = 1$ can be derived much easier and why we decided to take a more difficult approach.
    
    \begin{proof}[Proof of the case $p = 1$]
        By formula \eqref{prob-1}, together with the observation that 
        \begin{align*}
            \left|a - a_{(1)}\right|^2 = (1 - a_1)^2 + \sum_{i=2}^na_i^2 = 2 - 2a_1,
        \end{align*} 
        our statement reduces to
        \begin{align}\label{d1-red1}
            \E\left|\sum_{i=1}^na_i\e_i\right| \le 1 - c(1 - a_1)
        \end{align}
        for some universal constant $c$, where $\e_1, \ldots, \e_n$ are i.i.d. Rademacher random variables. Now we split the proof into two cases.

        \medskip

        \noindent \textit{Case 1.} $a_1 \ge c'$, where $c' > 0$ is a constant to be chosen later. Let $G_1, \ldots, G_n$ be i.i.d. Gaussian random variables with mean 0 and variance $\frac{\pi}{2}$, so that $\E|G_i| = 1$ and $G_i$ has density $\frac{1}{\pi}e^{-\frac{x^2}{\pi}}$. Let the variables $G_i$ also be independent of the variables $\e_i$. Then \eqref{d1-red1} can be written as
        \begin{align}\label{d1-red2}
            \E\left|\sum_{i=1}^na_iG_i\right| - \E\left|\sum_{i=1}^na_i\e_i\right| \ge c(1 - a_1).
        \end{align}
        Now our strategy is as follows. We want to use the Lindeberg swapping argument, i.e. exchange Gaussians for Rademachers one by one, keeping track of the deficit. For an example of how stability estimates can be derived in this way, see the proof of Theorem 1 in \cite{J}. However, unlike in \cite{J}, we want to derive the full deficit on the first exchange and only guarantee that later exchanges do not decrease the deficit. An advantage of this modification is that we may guarantee that the part untouched by an exchange is a Gaussian random variable, which simplifies estimates depending on its distribution.
    
        Let $Z = \sum_{i=2}^na_iG_i$. Using the Lindeberg swapping argument, we see that
        \begin{align*}
            \E|a_1\e_1 + Z| - \E\left|\sum_{i=1}^na_i\e_i\right| = \sum_{i=2}^n\left(\E\left|\sum_{j < i}a_j\e_j + \sum_{j \ge i}a_jG_j\right| - \E\left|\sum_{j \le i}a_j\e_j + \sum_{j > i}a_jG_j\right|\right).
        \end{align*}
        We claim that all terms in the sum on the RHS above are non-negative. Indeed, in the $i$-th term we may condition on all variables with indices $j \ne i$ and observe, by Jensen's inequality, that for every $t \in \R$ we have
        \begin{align*}
            \E|a_iG_i + t| = \E|a_i|G_i|\e_i + t| \ge \E|a_i\e_i\E|G_i| + t| = \E|a_i\e_i + t|.
        \end{align*}
        Thus, to show \eqref{d1-red2} it suffices to prove that
        \begin{align}\label{d1-finalred}
            \E|a_1G_1 + Z| - \E|a_1\e_1 + Z| \ge c(1 - a_1).
        \end{align}
        Let $\phi_z(x) = \E|x\e_1 + z| = \frac 1 2|x + z| + \frac 1 2|x - z| = \max\{|x|, |z|\}$ for $x, z \in \R$. Then
        \begin{align*}
            \E|a_1G_1 + Z| - \E|a_1\e_1 + Z| = \E[\phi_Z(a_1|G_1|) - \phi_Z(a_1)].
        \end{align*}
        Since
        \begin{align*}
            \E\1_{|Z| > a_1}(\phi_Z(a_1|G_1|) - \phi_Z(a_1)) &= \E\1_{|Z| > a_1}(\max\{|Z|, a_1|G_1|\} - |Z|) \\ &= \E\1_{|Z| > a_1}(a_1|G_1| - |Z|)_+ \ge 0,
        \end{align*}
        we have (using the independence and Gaussianity of $G_1$ and $Z$ together with the observation that $Z$ has the same distribution as $\sqrt{1-a_1^2}G_1$)
        \begin{align*}
            \E[\phi_Z(a_1|G_1|) - \phi_Z(a_1)] &\ge \E\1_{|Z| \le a_1}(\phi_Z(a_1|G_1|) - \phi_Z(a_1)) = \E\1_{|Z| \le a_1}(\max\{|Z|, a_1|G_1|\} - a_1) \\ &= \E\1_{|Z| \le a_1}\left((|Z| - a_1|G_1|)_+ + a_1|G_1| - a_1\right) \\ &= \E\1_{|Z| \le a_1}\E[a_1|G_1| - a_1] + \E\1_{a_1|G_1| \le |Z| \le a_1}(|Z| - a_1|G_1|) \\ &= \E_Z\left[\1_{|Z| \le a_1}\E_G\1_{a_1|G_1| \le |Z|}(|Z| - a_1|G_1|)\right] \\ &\ge \E_Z\left[\1_{|Z| \le a_1}\E_G\1_{a_1|G_1| \le |Z|/2}(|Z| - a_1|G_1|)\right] \\ &\ge \E\left[\1_{|Z| \le a_1}\Prb_G\left(|G_1| \le \frac{|Z|}{2a_1}\right)\frac{|Z|}{2}\right] \ge \E\1_{|Z| \le a_1}\frac{|Z|}{2} \cdot \frac{|Z|}{a_1} \cdot \frac{1}{\pi}e^{-\frac{|Z|^2}{4\pi a_1^2}} \\ &\ge \frac{1}{2\pi a_1}e^{-\frac{a_1^2}{4\pi a_1^2}}\E\1_{|Z| \le a_1}|Z|^2 = \frac{\tilde{c}\left(1 - a_1^2\right)}{a_1}\E\1_{|G_1|^2 \le \frac{a_1^2}{1 - a_1^2}}|G_1|^2 \\ &\ge \tilde{c}(1 - a_1)\E\1_{|G_1|^2 \le (c')^2}|G_1|^2 = c(1 - a_1).
        \end{align*}
        Hence \eqref{d1-finalred} is proved.

        \medskip

        \noindent \textit{Case 2.} $a_1 < c'$. Then to prove \eqref{d1-red1} it suffices to show that
        \begin{align}\label{d1-altred}
            \E\left|\sum_{i=1}^na_i\e_i\right| \le 1 - c.
        \end{align}
        Let $G$ be a standard Gaussian random variable with mean 0 and variance 1. Then we have $\E|G| = \sqrt{\frac{2}{\pi}} < 1$. By the Central Limit Theorem, one expects that for suitably small $c'$ the LHS of \eqref{d1-altred} is close to $\E|G|$ and hence bounded away from 1. It remains to formalize this idea.

        Denote $S = \sum_{i=1}^na_i\e_i$. We have
        \begin{align*}
            |\E|S| - \E|G|| &= \left|\int_0^{\infty}\Prb(|S| > t)dt - \int_0^{\infty}\Prb(|G| > t)dt\right| \\ &\le \int_0^T|\Prb(|S| > t) - \Prb(|G| > t)|dt + \int_T^{\infty}(\Prb(|S| > t) + \Prb(|G| > t))dt
        \end{align*}
        for any $T > 0$. By standard tail bounds for Gaussian and Rademacher random variables the second integral is bounded by $2\int_T^{\infty}e^{-\frac{t^2}{2}}dt \le 2\int_T^{\infty}e^{1-t}dt = 2e^{1-T}$. By the Berry-Esseen bound the first integral is bounded by $CT\sum_{i=1}^na_i^3 \le CTa_1\sum_{i=1}^na_i^2 \le CTc'$, where $C$ is the constant from the Berry-Esseen theorem. Thus if we choose $T$ such that $2e^{1-T} < \frac 1 4\left(1 - \sqrt{\frac{2}{\pi}}\right)$, then $c'$ such that $CTc' < \frac 1 4\left(1 - \sqrt{\frac{2}{\pi}}\right)$, and finally take $c \le \frac 1 2\left(1 - \sqrt{\frac{2}{\pi}}\right)$, then
        \begin{align*}
            \E|S| \le \E|G| + \frac 1 2\left(1 - \sqrt{\frac{2}{\pi}}\right) \le 1 - c.
        \end{align*}
        This completes the proof.
    \end{proof}

    \begin{xrem}
        After writing down this proof we learned (with the help of ChatGPT 5.6 Sol (Plus), see \textbf{AI tools disclosure} at the end of the paper) that the inequality \eqref{d1-red1} follows easily from Theorem 1.3 in \cite{JOW}, which is a strengthening of the ideas developed in \cite{FKN}. Indeed, it suffices to substitute $X_i = a_i\e_i$ in Theorem 1.3 in \cite{JOW} and do a simple computation. However, the result of \cite{JOW} does not imply the analogous result for $p > 1$, since it compares the absolute first moment of a sum with its variance, whereas we compare it with the first absolute moment of a single random variable. These two quantities coincide in the Rademacher case, but not for the random variables $X_i$ from the formula \eqref{prob-gen}. We decided to keep our proof in the paper since we shall proceed with the case $1 < p < 2$ in an analogous way and we think that the main idea of our approach is easier to understand for $p = 1$.
    \end{xrem}

    For $p > 1$ our strategy is roughly the same as for $p = 1$. However, in the aforementioned case we heavily relied on the decomposition $G \sim |G|\e$, where $G$, $\e$ are Gaussian and Rademacher random variables, respectively. This decomposition has no obvious counterpart for random variables with density proportional to $|t|^{\frac{2-p}{p-1}}e^{-|t|^{\frac{p}{p-1}}}$ in place of Rademacher random variables. In order to overcome this difficulty we shall follow the ideas of \cite{BN} and use the so-called Choquet ordering. It can be defined for arbitrary Radon measures on $\R^n$, but since we are interested only in symmetric random variables, we shall restrict our definition to this case.

    \begin{df}
        Let $X$ and $Y$ be symmetric random variables. We say that $X$ is dominated by $Y$ in the Choquet ordering and denote $X \preceq Y$ if for every convex function $\phi \colon \R \to [0, \infty)$ we have
        \begin{align*}
            \E\phi(X) \le \E\phi(Y).
        \end{align*}
    \end{df}

    The following proposition was proved by Barthe and Naor in \cite{BN}.

    \begin{prop}\label{choquet}
        Let $1 < p < 2$, let $G$ be a Gaussian random variable with mean 0 and variance $\frac{\pi}{2}$, so that $\E|G| = 1$, and $X$ be a random variable with density $f_p(t) = \alpha_p|t|^{\frac{2-p}{p-1}}e^{-\beta_p|t|^{\frac{p}{p-1}}}$, where $\alpha_p$ and $\beta_p$ are such that $f_p$ is a probability density and $\E|X| = 1$. Then $X \preceq G$.
    \end{prop}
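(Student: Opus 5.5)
The plan is to reduce the Choquet comparison to a comparison of stop-loss transforms of $|X|$ and $|G|$, and to get that comparison from a sign-change (cut) criterion for their densities.

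\emph{Reduction.} Let $\phi\colon\R\to[0,\infty)$ be convex. Since $X$ and $G$ are symmetric, $\E\phi(X)=\E\psi(|X|)$ and $\E\phi(G)=\E\psi(|G|)$, where $\psi(u)=\frac12(\phi(u)+\phi(-u))$. The function $\psi$ is even and convex, hence nondecreasing on $[0,\infty)$. On $[0,\infty)$ such a $\psi$ is an increasing limit of functions of the form
\begin{align*}
A+Bu+\sum_{k}c_k(u-t_k)_+, \qquad B\ge 0,\ c_k\ge 0,\ t_k\ge 0.
\end{align*}
By monotone convergence it therefore suffices to check three things:
\begin{itemize}
\item equality for constants, which is trivial;
\item equality for $u\mapsto u$, which is exactly the normalisation $\E|X|=\E|G|=1$;
\item the inequality $\E(|X|-t)_+\le\E(|G|-t)_+$ for every $t\ge0$.
\end{itemize}
Integrability is not an issue: $X$ has moments of all orders, and if $\E\psi(|G|)=\infty$ there is nothing to prove.

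\emph{Counting density crossings.} Let $f=2f_p$ and $g$ denote the densities of $|X|$ and $|G|$ on $(0,\infty)$. Write $r=\frac{2-p}{p-1}>0$ and $q=\frac{p}{p-1}>2$. Up to an additive constant,
\begin{align*}
\log f(t)-\log g(t)=r\log t-\beta_p t^q+\frac{t^2}{\pi}.
\end{align*}
Its derivative, multiplied by $t$, equals
\begin{align*}
h(t)=r-q\beta_p t^q+\frac{2t^2}{\pi}.
\end{align*}
Here $h(0)=r>0$, and $h'(t)=t\bigl(\tfrac4\pi-q^2\beta_p t^{q-2}\bigr)$ is positive and then negative. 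So $h$ first increases and then decreases to $-\infty$, and it has exactly one zero. Consequently $\log(f/g)$ is unimodal, tending to $-\infty$ at $0$ (because $r>0$) and at $\infty$ (because $q>2$). Hence $f-g$ has at most two sign changes, with pattern $-,+,-$. It cannot have fewer: if $f-g$ were $\le 0$ or $\ge0$ throughout, this would contradict $\int f=\int g=1$, and a single sign change would contradict $\int tf=\int tg$. So the pattern is exactly $-,+,-$.

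\emph{Cut-criterion argument.} Set $\Delta(t)=\Prb(|G|>t)-\Prb(|X|>t)$, so that $\Delta(0)=\Delta(\infty)=0$ and $\Delta'=f-g$. From the sign pattern, $\Delta$ decreases, then increases, then decreases, so it is first negative and then positive. Next set
\begin{align*}
\Psi(t)=\int_t^\infty\Delta(s)\,ds=\E(|G|-t)_+-\E(|X|-t)_+.
\end{align*}
Then $\Psi(0)=\E|G|-\E|X|=0$ and $\Psi(\infty)=0$. Since $\Psi'=-\Delta$, the function $\Psi$ is first increasing and then decreasing, and therefore $\Psi\ge0$. This is exactly the stop-loss inequality needed in the reduction.

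The main obstacle is the crossing count: showing rigorously that $f-g$ changes sign exactly twice. The unimodality of $h$ handles this, but it relies on $q>2$, i.e.\ on $p<2$. The rest is the standard Karlin--Novikoff argument.
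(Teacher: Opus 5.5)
Your argument is correct: the reduction of the Choquet comparison to stop-loss functions, the crossing count via $t\,\frac{d}{dt}\log(f/g)=r-q\beta_p t^q+\frac{2t^2}{\pi}$ (which is where $q>2$, i.e.\ $p<2$, is used), and the Karlin--Novikoff step all check out. The paper does not prove Proposition~\ref{choquet} itself but cites Barthe and Naor. Their argument rests on the same two-crossing pattern $g>f_p$, $g<f_p$, $g>f_p$ on $(0,\infty)$ (which the paper later quotes as Lemma~9 of \cite{BN}), combined with $\E|X|=\E|G|$, so your route is essentially the same as theirs.
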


    \begin{proof}[Proof of Theorem \ref{main}(d) for $1 < p < 2$]
        As in the case $p = 1$, using formula \eqref{prob-gen} instead of \eqref{prob-1}, we reduce the statement to
        \begin{align}\label{d-red1}
            \E\left|\sum_{i=1}^na_iX_i\right| \le 1 - c_p(1 - a_1)
        \end{align}
        for some constant $c_p$ depending only on $p$, where $X_1, \ldots, X_n$ are independent copies of $X$ from Proposition \ref{choquet}. We again split the proof into two cases.

        \medskip

        \noindent \textit{Case 1.} $a_1 \ge c_p'$, where $c_p' > 0$ is a constant to be chosen later. Let $G_1, \ldots, G_n$ be independent copies of $G$ from Proposition \ref{choquet}, independent of the variables $X_i$. Clearly \eqref{d-red1} reduces to
        \begin{align*}
            \E\left|\sum_{i=1}^na_iG_i\right| - \E\left|\sum_{i=1}^na_iX_i\right| \ge c_p(1 - a_1).
        \end{align*}
        Following the strategy used in the case $p = 1$, we denote $Z = \sum_{i=2}^na_iG_i$ and observe that, by Proposition \ref{choquet} and convexity of $x \mapsto |x + t|$, we have $\E|a_iG_i + t| \ge \E|a_iX_i + t|$ for every $t \in \R$. Hence it suffices to prove that
        \begin{align}\label{d-finalred}
            \E|a_1G_1 + Z| - \E|a_1X_1 + Z| \ge c_p(1 - a_1).
        \end{align}
        Let $f_p$, $g$ be the densities of $X_1$, $G_1$, respectively, i.e. $f_p$ is the same as in Proposition \ref{choquet} and $g(x) = \frac{1}{\pi}e^{-\frac{x^2}{\pi}}$. Let also $\phi_z(x) = \frac 1 2|x + z| + \frac 1 2|x - z| = \max\{|x|, |z|\}$ for $x, z \in \R$. Then
        \begin{align}\label{d-pom}
            \E|a_1G_1 + Z| - \E|a_1X_1 + Z| = 2\E\int_0^{\infty}\phi_Z(a_1x)(g(x) - f_p(x))dx.
        \end{align}
        Fix $z \in \R$ for a moment. Since $\int_0^{\infty}x(g(x) - f_p(x))dx = 0$ (recall that $\E|X_1| = \E|G_1|$), we have
        \begin{align}\label{d-int1}
            \int_0^{\infty}\phi_z(a_1x)(g(x) - f_p(x))dx &= \int_0^{\infty}(a_1x + (|z| - a_1x)_+)(g(x) - f_p(x))dx \notag \\ &= \int_0^{\frac{|z|}{a_1}}(|z| - a_1x)(g(x) - f_p(x))dx.
        \end{align}
        By Lemma 9 in \cite{BN} we know that there exists numbers $0 < x_p < y_p < \infty$ such that $g(x) > f_p(x)$ for $x \in [0, x_p) \cup (y_p, \infty)$ and $g(x) < f_p(x)$ for $x \in (x_p, y_p)$. In particular, there exist positive constants $b_p$, $d_p$, depending only on $p$, such that $g(x) - f_p(x) \ge b_p$ for $0 \le x \le d_p$. Thus, if we additionally assume that $|z| \le a_1x_p$, do that $g(x) \ge f_p(x)$ for $0 \le x \le \frac{|z|}{a_1}$, we get
        \begin{align}\label{d-int2}
            \int_0^{\frac{|z|}{a_1}}(|z| - a_1x)(g(x) - f_p(x))dx &\ge \int_0^{\min\left\{d_p, \frac{|z|}{2a_1}\right\}}(|z| - a_1x)(g(x) - f_p(x))dx \notag \\ &\ge \frac{b_p|z|}{2}\min\left\{d_p, \frac{|z|}{2a_1}\right\}.
        \end{align}
        Combining \eqref{d-pom}, \eqref{d-int1} and \eqref{d-int2} leads to
        \begin{align*}
            \E|a_1G_1 + Z| - \E|a_1X_1 + Z| \ge b_p\E|Z|\1_{|Z| \le a_1x_p}\min\left\{d_p, \frac{|Z|}{2a_1}\right\} \ge \frac{b_p}{2a_1}\E|Z|^2\1_{|Z| \le a_1\min\{x_p, 2d_p\}}.
        \end{align*}
        The first inequality above holds true since the integral $\int_0^{\infty}\phi_z(a_1x)(g(x) - f_p(x))dx$ is non-negative for every $z \in \R$ due to the convexity of $\phi_z$ and Proposition \ref{choquet}. 
        
        Now we finish the proof of \eqref{d-finalred} in a manner similar to the last three lines of the corresponding case $a_1 \ge c'$ for $p = 1$.

        \medskip

        \noindent \textit{Case 2.} $a_1 < c_p'$. To proceed completely analogously to the case $p = 1$ we only need to ensure that $\E X_1^2 < \frac{\pi}{2}$ and that $\sum_{i=1}^na_iX_i$ admits suitable tail estimates. For the latter, observe that the random variable $X_1$ is $\frac{\pi}{2}$-subgaussian. It follows immediately from Proposition \ref{choquet}, since by convexity of $x \mapsto e^{\lambda x}$ we have $\E e^{\lambda X_1} \le \E e^{\lambda G_1} \le e^{\frac{\pi\lambda^2}{4}}$ for all $\lambda \in \R$. Thus the random variable $\sum_{i=1}^na_iX_i$ is also $\frac{\pi}{2}$-subgaussian and hence admits subgaussian tail estimates, allowing to bound $\int_T^{\infty}\Prb\left(\left|\sum_{i=1}^na_iX_i\right| > t\right)dt$ exponentially in $T$.

        Denote $\E X_1^2 = \sigma^2$ and recall that the density $f_p$ of $X_1$ is given by $\alpha_p|x|^{\frac{2-p}{p-1}}e^{-\beta_p\frac{p}{p-1}}$. By standard computations we deduce that
        \begin{align*}
            \int_{\R}f_p(x)dx = 1 = \int_{\R}|x|f_p(x)dx \quad \Longrightarrow \frac{2(p-1)\alpha_p\Gamma\left(\frac 1 p\right)}{p\beta_p^{\frac 1 p}} = 1 = \frac{2(p-1)\alpha_p}{p\beta_p}.
        \end{align*}
        Hence $\beta_p = \Gamma\left(\frac 1 p\right)^{-\frac{p}{p-1}}$ and
        \begin{align*}
            \sigma^2 &= \int_{\R}x^2f_p(x)dx = \frac{2(p-1)\alpha_p\Gamma\left(2 - \frac 1 p\right)}{p\beta_p^{2 - \frac 1 p}} = \frac{\Gamma\left(2 - \frac 1 p\right)}{\beta_p^{1 - \frac 1 p}} = \Gamma\left(2 - \frac 1 p\right)\Gamma\left(\frac 1 p\right) = \frac{\pi\left(1 - \frac 1 p\right)}{\sin\left(\frac{\pi}{p}\right)}.
        \end{align*}
        Since the function $t \mapsto \frac{t}{\sin(\pi - t)} = \frac{t}{\sin t}$ is increasing on $\left(0, \frac{\pi}{2}\right)$, we have $\sigma^2 < \frac{\pi}{2}$. This completes the proof of Theorem \ref{main}(d).
    \end{proof}

    \section{Final remarks}

    One could expect that deficits in Theorem \ref{main}(b) and (c) to have similar forms, since the unit ball in $\ell_{\infty}^n$ is, in some sense, a limit of the unit balls in $\ell_p^n$ as $p \to \infty$. However, the actual forms of these deficits are hard to compare. In order to do so, we shall prove the following proposition.

    \begin{prop}\label{b-altform}
        There exists a positive constant $c_{\infty}'$ such that for every $n \ge 1$ and every vector $a = (a_1, \ldots, a_n) \in \R^n$ with $\sum_{i=1}^na_i^2 = 1$ and $a_1 \ge a_2 \ge \ldots \ge a_n \ge 0$ we have
        \begin{align*}
            \frac{\vol_{n-1}\left(\Proj_{a^{\perp}}B_{\infty}^n\right)}{\vol_{n-1}\left(\Proj_{a_{(n)}^{\perp}}B_{\infty}^n\right)} \le 1 - \frac{c'_{\infty}}{a_1^3\sqrt{n}}\sum_{i=1}^n\left(a_i^2 - \frac 1 n\right)^2.
        \end{align*}
    \end{prop}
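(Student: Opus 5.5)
The plan is to compare both sides directly with the deficit $\delta \coloneqq 1 - \frac{1}{\sqrt n}\sum_{i=1}^n a_i$. By formula \eqref{prob-inf}, as in the proof of Theorem \ref{main}(b), the ratio of volumes in Proposition \ref{b-altform} equals $1-\delta$. So it suffices to show
\begin{align*}
\sum_{i=1}^n\left(a_i^2 - \frac 1 n\right)^2 \le C a_1^3\sqrt{n}\,\delta
\end{align*}
for a universal constant $C$, and then take $c'_\infty = 1/C$.

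The first step rewrites $\delta$ as a squared distance, without passing through the variance bound \eqref{simplestep}. Since $\sum_{i=1}^n a_i^2 = 1$, we have
\begin{align*}
\sum_{i=1}^n\left(a_i - \frac{1}{\sqrt n}\right)^2 = 1 - \frac{2}{\sqrt n}\sum_{i=1}^n a_i + 1 = 2\delta,
\end{align*}
so $\delta = \frac12 |a - a_{(n)}|^2$ exactly.

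The second step factors each summand of the target quantity as
\begin{align*}
a_i^2 - \frac1n = \left(a_i - \frac1{\sqrt n}\right)\left(a_i + \frac1{\sqrt n}\right).
\end{align*}
Since $0 \le a_i \le a_1$, and $a_1 \ge \frac1{\sqrt n}$ (the largest coordinate of a unit vector in $\R^n$), we get $0 \le a_i + \frac1{\sqrt n} \le 2a_1$. Hence
\begin{align*}
\sum_{i=1}^n\left(a_i^2 - \frac1n\right)^2 \le 4a_1^2\sum_{i=1}^n\left(a_i - \frac1{\sqrt n}\right)^2 = 8a_1^2\,\delta.
\end{align*}

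The last step absorbs the remaining powers. From $a_1\sqrt n \ge 1$ we get $8a_1^2 \le 8a_1^3\sqrt n$, so the claim holds with $c'_\infty = \frac18$.

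I do not expect a genuine obstacle. The only point requiring care is to use the exact identity $\delta = \frac12|a-a_{(n)}|^2$ rather than the lossy factor $\frac12$ from \eqref{simplestep}, although even that loss would only change the constant. The content of the proposition is the observation that the extra factor $a_1^3\sqrt n \ge 1$ makes the deficit of Theorem \ref{main}(c) comparable with the natural $\ell_\infty$ deficit. This gives the comparison between parts (b) and (c) that Section 5 is aiming for.
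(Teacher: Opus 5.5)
Your proof is correct (with $c'_\infty = \frac18$) and takes a different, more elementary route than the paper; it also proves a stronger bound.

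The paper sets $x_i = a_i^2$ and Taylor-expands $\sqrt{x}$ around $\frac1n$ with integral remainder. The linear terms cancel because $\sum_i(x_i - \frac1n) = 0$, which gives $\delta = \frac{1}{4\sqrt n}\sum_i\int_{1/n}^{x_i}\frac{x_i - t}{t^{3/2}}\,dt$. It then bounds each remainder below by $\frac{(x_i - 1/n)^2}{4\max\{1/n, x_i\}^{3/2}}$ via Lemma \ref{altbound}, and finally uses $\max\{\frac1n, x_i\}^{3/2} \le a_1^3$, which yields $c'_\infty = \frac1{16}$.

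Each remainder integral in fact equals $2\sqrt n\,(a_i - \frac1{\sqrt n})^2$. So the paper's starting point is exactly your identity $\delta = \frac12|a - a_{(n)}|^2$, and the loss occurs in the crude estimate of Lemma \ref{altbound}. Your factorization $a_i^2 - \frac1n = (a_i - \frac1{\sqrt n})(a_i + \frac1{\sqrt n})$ avoids that loss and gives the intermediate bound
\[
1 - \frac{1}{\sqrt n}\sum_{i=1}^n a_i \ge \frac{1}{8a_1^2}\sum_{i=1}^n\left(a_i^2 - \frac1n\right)^2.
\]
This is stronger than the proposition by the factor $a_1\sqrt n \ge 1$, which you discard only at the end.

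The two bounds agree up to constants when $a_1$ is of order $\frac1{\sqrt n}$, which is the regime discussed in the remark after the proposition. When $a_1$ is of order $1$, yours is better by a factor of order $\sqrt n$. Combined with Remark \ref{opt}, your bound also sharpens the inequality in the last remark of Section 5 to $\Var(X^2) \lesssim \|X\|_\infty^2\Var(X)$.

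What the paper's route buys is mainly a visible parallel with part (c). There the factor $a_1^{-3}$ comes from the second derivative $t^{-3/2}$ of the square root, the same mechanism that produces $(s+\eta\sigma^2)^{-3/2}$ in Lemma \ref{mixlem}. Your computation shows that this exponent is not sharp for $p=\infty$.
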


    We shall need the following simple lemma.

    \begin{lem}\label{altbound}
        Let $0 \le x < y \le 1$. Then
        \begin{align*}
            \int_x^y\frac{y - t}{t^{\frac 3 2}}dt \ge \int_x^y\frac{t - x}{t^{\frac 3 2}}dt \ge \frac{(y-x)^2}{4y^{\frac 3 2}}.
        \end{align*}
    \end{lem}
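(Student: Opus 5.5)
Both inequalities are elementary; I would treat them separately.

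\emph{First inequality.} Compare the two integrands pointwise through the reflection $t \mapsto x + y - t$, which swaps $y - t$ and $t - x$. Substituting $s = x + y - t$ in the left integral gives
\begin{align*}
\int_x^y \frac{y-t}{t^{3/2}}\,dt = \int_x^y \frac{s-x}{(x+y-s)^{3/2}}\,ds .
\end{align*}
Hence the difference of the two integrals equals
\begin{align*}
\int_x^y (t-x)\left((x+y-t)^{-3/2} - t^{-3/2}\right)dt .
\end{align*}
Now pair the points $t$ and $x+y-t$ to symmetrize the integrand around the midpoint $m = \frac{x+y}{2}$. With $u = t - m \in [-h, h]$ and $h = \frac{y-x}{2}$, the symmetrized integrand is
\begin{align*}
\frac12\left[(u+h) - (h-u)\right]\left((m-u)^{-3/2} - (m+u)^{-3/2}\right) = u\left((m-u)^{-3/2} - (m+u)^{-3/2}\right).
\end{align*}
This is nonnegative because $s \mapsto s^{-3/2}$ is decreasing. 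If $x = 0$, the integrals may diverge; in that case the first inequality is trivial if the left side is $+\infty$, and one can also handle it by a limiting argument.

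\emph{Second inequality.} On $[x, y]$ we have $t \le y$, so $t^{-3/2} \ge y^{-3/2}$. Therefore
\begin{align*}
\int_x^y \frac{t-x}{t^{3/2}}\,dt \ge y^{-3/2}\int_x^y (t-x)\,dt = \frac{(y-x)^2}{2y^{3/2}},
\end{align*}
which is even better than the required $\frac{(y-x)^2}{4y^{3/2}}$. For $x = 0$ the integral is $\int_0^y t^{-1/2}\,dt = 2\sqrt{y} \ge \frac{y^{1/2}}{4}$, so the bound still holds.

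The only mild obstacle is the symmetrization in the first inequality; the hypothesis $y \le 1$ is not needed.
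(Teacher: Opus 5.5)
Your proof is correct and is essentially the paper's argument. For the first inequality the paper writes the difference as $\int_x^y (x+y-2t)\,t^{-3/2}\,dt$ and folds it directly onto $[x,\frac{x+y}{2}]$ via $t\mapsto x+y-t$; this is your symmetrization without the preliminary substitution, and it likewise concludes from the monotonicity of $s\mapsto s^{-3/2}$. For the second inequality the paper uses the same bound $t^{-3/2}\ge y^{-3/2}$, but only on $[\frac{x+y}{2},y]$, so it gets the constant $\frac14$ where your integration over the whole interval gives $\frac12$; your remarks that $y\le 1$ is not needed and that the left-hand side is $+\infty$ when $x=0$ are also correct.
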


    \begin{proof}[Proof of Lemma \ref{altbound}]
        The first inequality follows easily from the computation
        \begin{align*}
            \int_x^y\frac{y + x - 2t}{t^{\frac 3 2}}dt = \int_x^{\frac{x+y}{2}}(x + y - 2t)\left(t^{-\frac 3 2} - ((x + y - t)^{-\frac 3 2}\right)dt \ge 0.
        \end{align*}
        For the second inequality, we crudely estimate
        \begin{align*}
            \int_x^y\frac{t - x}{t^{\frac 3 2}}dt \ge \int_{\frac{x+y}{2}}^y\frac{t - x}{t^{\frac 3 2}}dt \ge \frac{y - x}{2} \cdot \frac{\frac{y - x}{2}}{y^{\frac 3 2}} =  \frac{(y-x)^2}{4y^{\frac 3 2}}.
        \end{align*}
    \end{proof}

    \begin{proof}[Proof of Proposition \ref{b-altform}]
        Recall that
        \begin{align*}
            \frac{\vol_{n-1}\left(\Proj_{a^{\perp}}B_p^n\right)}{\vol_{n-1}\left(\Proj_{a_{(n)}^{\perp}}B_p^n\right)} = \frac{1}{\sqrt{n}}\sum_{i=1}^na_i.
        \end{align*}
        Denote $x_i = a_i^2$. Using the Taylor expansion with integral remainder of the function $x \mapsto \sqrt{x}$, we have
        \begin{align*}
            1 - \frac{\vol_{n-1}\left(\Proj_{a^{\perp}}B_p^n\right)}{\vol_{n-1}\left(\Proj_{a_{(n)}^{\perp}}B_p^n\right)} &= 1 - \frac{1}{\sqrt{n}}\sum_{i=1}^n\sqrt{x_i} = \frac{1}{\sqrt{n}}\sum_{i=1}^n\left(\frac{1}{\sqrt{n}} - \sqrt{x_i}\right) \\ &= \frac{1}{\sqrt{n}}\sum_{i=1}^n\left(-\frac{\sqrt{n}}{2}\left(x_i - \frac 1 n\right) - \int_{\frac 1 n}^{x_i}-\frac{1}{4t^{\frac 3 2}}(x_i - t)dt\right) \\ &= \frac{1}{4\sqrt{n}}\sum_{i=1}^n\int_{\frac 1 n}^{x_i}\frac{x_i - t}{t^{\frac 3 2}}dt,
        \end{align*}
        since $\sum_{i=1}^n\left(x_i - \frac 1 n\right) = 0$. We use the convention $\int_x^y = -\int_y^x$ and hence $\int_{\frac 1 n}^{x_i}\frac{x_i - t}{t^{\frac 3 2}}dt = \int_{x_i}^{\frac 1 n}\frac{t - x_i}{t^{\frac 3 2}}dt$ if $x_i < \frac 1 n$. Thus, by Lemma \ref{altbound}, we have
        \begin{align*}
            \frac{1}{4\sqrt{n}}\sum_{i=1}^n\int_{\frac 1 n}^{x_i}\frac{x_i - t}{t^{\frac 3 2}}dt \ge \frac{1}{16\sqrt{n}}\sum_{i=1}^n\frac{\left(x_i - \frac 1 n\right)^2}{\max\left\{\frac 1 n, x_i\right\}^{\frac 3 2}} \ge \frac{c'_{\infty}}{a_1^3\sqrt{n}}\sum_{i=1}^n\left(a_i^2 - \frac 1 n\right)^2,
        \end{align*}
        where in the last inequality we use $a_1 \ge \max\left\{a_i, \frac{1}{\sqrt{n}}\right\}$ for all $i$. This completes the proof.
    \end{proof}

    \begin{rem}
        If a vector $a$ is close to the vector $a_{(n)}$, in particular $a_1$ is of order $\frac{1}{\sqrt{n}}$, then the deficit obtained in Proposition \ref{b-altform} is better than the deficit in Theorem \ref{main}(c) by a factor of order $n$ (omitting the dependence on $p$ and assuming that $c_p \sim c_{\infty}$). Clearly it is no better than the deficit in Theorem \ref{main}(b), as pointed out in Remark \ref{opt}.
    \end{rem}

    \begin{rem}
        Let $X$ be the random variable from the proof of Theorem \ref{main}(b). We have
        \begin{align*}
            \Var(X) = \frac{1}{2n^2}\sum_{i, j = 1}^n\left(a_i - a_j\right)^2.
        \end{align*}
        On the other hand, we have also
        \begin{align*}
            \Var\left(X^2\right) = \frac 1 n\sum_{i=1}^n\left(a_i^2 - \frac 1 n\right)^2.
        \end{align*}
        Together with Theorem \ref{main}(b), Proposition \ref{b-altform} and Remark \ref{opt} this yields
        \begin{align*}
            \Var(X)\|X\|_{\infty}^3 \gtrsim \frac{1}{\sqrt{n}}\Var\left(X^2\right) = \Var\left(X^2\right)\sqrt{\E X^2}.
        \end{align*}
        After omitting the middle expression, the resulting inequality is homogeneous and thus holds for any non-negative random variable $X$ uniformly distributed on an $n$-element multiset.
    \end{rem}

    \medskip

    \noindent \textbf{AI tools disclosure.} Large language model ChatGPT 5.6 Sol (Plus) was used to generate parts of the text, improve the language of the paper, check typos and search the literature. In particular ChatGPT found the references \cite{JOW}, \cite{FKN} and pointed a few minor mathematical errors. All mathematical ideas, as well as their technical implementation, were developed entirely by the author. The author carefully checked all AI-generated content and takes full responsibility for it.

    \medskip
    \medskip

    \noindent \textbf{Acknowledgement.} I would like to thank Piotr Nayar for helpful discussions.

    \medskip
    \medskip

    \noindent Institute of Mathematics \\
    University of Warsaw \\
    02-097, Warsaw, Poland \\
    jj406165@mimuw.edu.pl
	
\end{document}